\begin{document}

\title[Sensing tensors with Gaussian filters]
{Sensing tensors with Gaussian filters}
\author{St\'ephane Chr\'etien {\tiny and} Tianwen Wei}

\address{Laboratoire de Math\'ematiques, UMR 6623\\ 
Universit\'e de Franche-Comt\'e, 16 route de Gray,\\
25030 Besancon, France} 
\email{stephane.chretien,tianwen.wei@univ-fcomte.fr}

\maketitle

\begin{abstract}
Sparse recovery from linear Gaussian measurements has been the subject of much investigation since the 
breaktrough papers \cite{CRT:IEEEIT06} and \cite{donoho2006compressed} on Compressed Sensing. Application to 
sparse vectors and sparse matrices via least squares penalized with sparsity promoting norms 
is now well understood using tools such as Gaussian mean width, statistical
dimension and the notion of descent cones \cite{tropp2014convex} \cite{Vershynin:ArXivEstimation14}. Extention of these ideas to 
low rank tensor recovery is starting to enjoy considerable interest due to its many potential applications to 
Independent Component Analysis, Hidden Markov Models and Gaussian Mixture Models \cite{AnandkumarEtAl:JMLR14}, 
hyperspectral image analysis \cite{zhang2008tensor}, to name a few. In this paper, we demonstrate that the recent approach of 
\cite{Vershynin:ArXivEstimation14} provides very useful error bounds in the tensor setting using the nuclear norm or the 
Romera-Paredes--Pontil \cite{RomeraParedesPontil:NIPS13} penalization.    
\end{abstract}

\section{Introduction}
Real tensors, i.e. multidimensional arrays of real numbers, 
have been recently a subject of great interest in the applied mathematics community.  
We refer to  \cite{Landsberg:Tensors12} and \cite{Hackbusch:TensorSpaces12} for modern references on this subject. 
It has become quite clear nowadays that real symmetric tensors such as cumulants up to fourth order play a very 
important role in many applications in statistics, machine learning and signal processing; see for instance 
\cite{KoldaBader:SIREV09} for a general survey. Research on applications of tensors 
has been increasing in the recent years with very important conceptual contributions such as proposed in \cite{AnandkumarEtAl:NIPS12},
\cite{AnandkumarEtAl2:NIPS12}, \cite{ComonMourrain:SIMAX08}, \cite{BrachatEtAl:LAA10} and the very nice survey of applications in 
\cite{AnandkumarEtAl:JMLR14}. In particular, certain Gaussian Mixture Models (GMM) can be estimated using this approach. The same is also true for 
Independent Component Analysis (ICA) and Hidden Markov Models (HMM). Nonsymmetric tensors also occur frequently in applications as 3D images 
such as in medical imaging and hyperspectral image processing \cite{KoldaBader:SIREV09}, \cite{ZhangEtAl:JOSA08}.

In some applications, the tensor is observed through the operation of random filtering, e.g. taking the scalar product with an i.i.d. Gaussian
random vector. 
Gaussian random sensing has been thoroughly investigated in recent years and can be recast as sparse recovery 
for one dimensional tensors (i.e. vectors), low rank recovery for bidimensional tensors (i.e. matrices). See e.g. \cite{Vershynin:ArXivEstimation14} 
for a tutorial on this topic.

The goal of this short note is to show that the results of \cite{Vershynin:ArXivEstimation14} can easily be extended to tensors. First, we consider 
nuclear norm minimization. Next, we consider recovery via minimization of the Romero-Paredes--Pontil functional. 

\section{Main facts about tensors}
Let $D$ and $n_1,\ldots,n_D$ be positive integers. Let $\Xc \in \mathbb R^{n_1\times \cdots \times n_D}$ denote a 
$D$-dimensional array of real numbers. We will also denote such arrays as tensors. 
\subsection{Basic notations and operations}
A subtensor  of $\Xc$ is a tensor obtained by fixing some of its coordinates. As an example, fixing
one coordinate $i_d=k$ in $\Xc$ for some $k\in\{1,\ldots, n_d\}$ yields a tensor in
$\Rb^{n_1\times\cdots\times n_{d-1} \times n_{d+1}\times \cdots \times n_D}$.
 In the sequel, we will denote this subtensor of  $\Xc$ by $\Xc_{i_d=k}$.
 
The fibers of a tensor are particular subtensors that have only one mode, i.e. obtained by fixing every coordinate except one. The mode-$d$ fibers are the vectors
\bean 
\left(\Xc_{i_1,\ldots,i_{d-1},i_d,i_{d+1},\ldots,i_D}\right)_{i_d=1,\ldots,n_d}.
\eean 
They extend the notion of columns and rows from the matrix to the tensor framework. 
For a matrix, the mode-1 fibers are the columns and the mode-2 fibers are the rows. 

The mode-$d$
matricization $\Xc_{(d)}$ of $\Xc$ is obtained by forming the matrix whose rows are the mode-$d$ fibers 
of the tensor, arranged in an cyclic ordering; see \cite{KoldaBader:SIREV09} for details. 

The mode-$d$ multiplication of a tensor $\Xc\in\Rb^{n_1\times\cdots \times n_D}$ by a matrix $U \in \Rb^{n_d'\times n_d}$, 
denoted by $\Xc\times_d U$, gives a tensor in $\Rb^{n_1\times\cdots \times n_d'\times\cdots \times n_D}$. It is  defined as 
\bean 
(\Xc\times_d U)_{i_1,\ldots,i_{d-1},i_d^\prime,i_{d+1},\ldots,i_D} 
& = & \sum_{i_d=1}^{n_d} \Xc_{i_1,\ldots,i_{d-1},i_d,i_{d+1},\ldots,i_D} U_{i_d',i_d}.
\eean 

Last, we denote by $\|\cdot\|_{F}$ the Frobenius norm, i.e.:
\begin{align*}
\|\Xc\|_{F} & = \left(\sum_{i_1=1}^{n_1}\cdots\sum_{i_D=1}^{n_D}\Xc_{i_1,\ldots,i_D}^2  \right)^{1/2},\quad 
\forall\Xc\in\Rb^{n_1\times \ldots\times n_D}.
\end{align*}


\subsection{Higher Order Singular Value Decomposition (HOSVD)}
The Tucker decomposition of a tensor is a very useful decomposition. It can be chosen so that 
after appropriate 
orthogonal transformations, one can reveal a tensor $\Sc$ hidden inside $\Xc$ 
enjoying interesting rank and orthogonality properties.
In this contribution, we will make use of the HOSVD, a generalization of the matrix SVD
to the tensor setting based on Tucker decomposition. 
\begin{theo}[HOSVD \cite{DeLathauwerEtAl:SIAMAX00}]
Every tensor $\Xc\in\Rb^{n_1\times \cdots \times n_D}$ can be written as
\bea\label{Tucker}
\Xc & = & \Sc(\Xc) \times_1 U^{(1)} \times_2 U^{(2)} \cdots \times_D U^{(D)}, 
\eea
where each $U^{(d)}\in\Rb^{n_d\times n_d}$ is an orthogonal matrix and 
$\Sc(\Xc)\in\Rb^{n_1\times \cdots \times n_D}$ is a tensor of the same size as $\Xc$ with the following properties:
\begin{itemize}
\item[1.] For all possible values of $d,\alpha$ and $\beta$ subject to 
$\alpha\neq \beta$, subtensors $\Sc(\Xc)_{i_d=\alpha}$ and $\Sc(\Xc)_{i_d=\beta}$ are orthogonal, i.e.
\bean
\langle \Sc(\Xc)_{i_d=\alpha}, \Sc(\Xc)_{i_d=\beta}\rangle =0. 
\eean
\item[2.] For all possible values of $d$, there holds :
\bean
\|\Sc(\Xc)_{i_d=1}\|_{F}\geq \| \Sc(\Xc)_{i_d=2} \|_{F}\geq \cdots \geq \| \Sc(\Xc)_{i_d=n_d} \|_{F}\geq 0.
\eean
\item[3.] The quantities $\|\Sc(\Xc)_{i_d=k}\|_{F}$ for $k=1,\ldots,n_d$ are the singular values of 
the mode-$d$ matricization $\Xc_{(d)}$ of $\Xc$ and the columns of $U^{(d)}$ are the corresponding singular vectors.
\end{itemize}
\end{theo}

Let $\otimes$ denote the standard Kronecker product 
for matrices. Then it follows from (\ref{Tucker}) that
\bea
\label{matriz}  \label{1021b}
\quad\quad\Xc_{(d)}=U^{(d)}  \Sc_{(d)}(\Xc)  \left(U^{(d+1)} \otimes \cdots \otimes U^{(D)} \otimes U^{(1)} \otimes \cdots \otimes U^{(d-1)}\right)^t,
\eea 
where $\Sc(\Xc)_{(d)}$ is the mode-$d$ matricization of $\Sc(\Xc)$.
Taking the (usual) SVD of the matrix $\Xc_{(d)}$
\bean 
\Xc_{(d)} & = & U^{(d)} \Sigma^{(d)} {V^{(d)}}^t
\eean  
and based on (\ref{matriz}), we get  
\bean 
\Sc_{(d)}(\Xc) & = & \Sigma^{(d)} {V^{(d)}}^t\left(U^{(d+1)} \otimes \cdots \otimes U^{(D)} \otimes U^{(1)} 
\otimes \cdots \otimes U^{(d-1)}\right).
\eean

\subsection{The spectrum}
The mode-$d$ spectrum is defined as the vector of singular values of $\Xc_{(d)}$ and
we will denote it by $\sigma^{(d)}(\Xc)$. 
Notice that this construction implies that $\Sc(\Xc)$ has orthonormal fibers for every modes. With a slight abuse of notation, we will denote by $\sigma$ 
the mapping which to each tensor $\Xc$ assigns the vector $1/\sqrt{D}\: (\sigma^{(1)},\ldots,\sigma^{(D)})$ of all mode-$d$ singular spectra.

\subsection{Tensor norms}
We can define several tensor norms on $\Rb^{n_1\times \cdots \times n_D}$. The first one is a natural extension of the 
Frobenius norm or Hilbert-Schmidt norm from matrices to tensors. We start by defining the following scalar product 
on $\Rb^{n_1\times \cdots \times n_D}$: 
\bean
\langle \Xc, \Yc\rangle & \defeq & \sum_{i_1=1}^{n_1}\cdots  \sum_{i_D=1}^{n_D} x_{i_1,\ldots,i_D} y_{i_1,\ldots,i_D}.   
\eean 
Using this scalar product, we can also define the Frobenius norm as
\bean 
\|\Xc\|_F & \defeq & \sqrt{\langle \Xc,\Xc\rangle}.
\eean
One may also define an "operator norm" in the same manner as for matrices as follows
\bean 
\|\Xc\|_{\phantom{S}} &\defeq & \max_{\stackrel{\displaystyle u^{(d)} \in\Rb^{n_d}, \ \|u^{(d)}\|_2=1}{d=1,\ldots,D}}
\langle\Xc, u^{(1)}\otimes\cdots \otimes u^{(D)}\rangle
\eean 
We also define the 
\bea \label{nuclearnorm}
\|\Xc\|_* & \defeq & \max_{\stackrel{\Yc\in\Rb^{n_1\times \cdots \times n_D}}{ \|\Yc\|\leq 1}}\langle \Xc, \Yc\rangle.
\eea
The norm $\Vert \cdot \Vert_*$ can be interpreted as a generalization of the nuclear norm for matrices. They can be shown to be 
equal for certain class of tensors such as Orthogonally Decomposable tensors \cite{ChretienWei:Prep15}. 
Another interesting function is the Romera-Paredes--Pontil functional.

\section{Tensor recovery based on random measurements using convex optimization}
\subsection{Previous works}
Our goal is to estimate an unknown but structured $n_1\times \cdots \times n_D$ tensor $\Xc^\sharp$ from $m$ linear observations
given by 
\bean
y_i=\langle \Gc_i, \Xc^\sharp\rangle, \quad i=1,\dots, m.
\eean
The unknown tensor of interests $\Xc^\sharp$ although resides in a extremely high dimensional data space, it has a low-rank structure in many applications. The general problem of estimating a low rank tensor has applications in many different areas, both theoretical and applied. We refer the readers to 
\cite{AnandkumarEtAl2:NIPS12, icml2013_romera-paredes13}.

For a tensor of low Tucker rank, the matrix unfolding along each mode has low rank. Given observations
$y_1,\ldots, y_m$, we would like to attempt to recover $\Xc^\sharp$ by minimizing some combination of the ranks of the unfoldings, over all tensors $\Xc^\sharp$ that are consistent with our observations. This yields the following optimization problem:
\bea\label{l0min}
\min_{\Xc \in \mathbb R^{n_1 \times \cdots \times n_D}} \sum_{d=1}^d \rank(\Xc_{(d)})\quad \textrm{ s.t. } \quad \langle \Gc_i, \Xc \rangle = y_i, \: i=1,\ldots, m.
\eea
Optimization problem (\ref{l0min}), although intuitive, is non-convex and NP hard. A natural convex surrogate  of (\ref{l0min}) can be obtained by replacing the rank with matrix nuclear norms \cite{fazel2002matrix}.  The resulting optimization problem becomes
\bea\label{l1min}
\min_{\Xc \in \mathbb R^{n_1 \times \cdots \times n_D}} \sum_{d=1}^d \|\Xc_{(d)}\|_*\quad \textrm{ s.t. } \quad \langle \Gc_i, \Xc \rangle = y_i, \: i=1,\ldots, m.
\eea
This optimization problem was first introduced by \cite{6138863, signoretto2010nuclear} and has been used successfully in a number of applications \cite{5651225}. 

In this work, we are going to consider a related but somewhat different optimization problem:
\bea\label{NuclearOptimization}
\min_{\Xc \in \mathbb R^{n_1 \times \cdots \times n_D}} \|\Xc\|_* \quad \textrm{ s.t. } \quad \langle \Gc_i, \Xc \rangle = y_i, \: i=1,\ldots, m,
\eea
where the tensor nuclear norm $\|\cdot\|_*$ is defined in (\ref{nuclearnorm}). We point out that in general 
$\|\Xc\|_*\neq \frac{1}{D}\sum_{d=1}^D \|\Xc_{(d)}\|_*$
but the two quantities coincides when $\Xc$ is an orthogonally decomposable tensor.
The main reason to consider (\ref{NuclearOptimization}) is that the method established in \cite{Vershynin:ArXivEstimation14} for Gaussian random filter can be easily generalized to the tensor framework.

\subsection{Recovery by nuclear norm minimization}

\begin{theo}
\label{main1}
Assume that $\Gc_i$, $i,\ldots,m$ are independent $n_1\times \cdots \times n_D$ random Gaussian tensors with independent $\Nc(0,1)$ entries.
Let $\hat{\Xc}$ to be a solution of the convex program
\bean
\min_{\Xc \in \mathbb R^{n_1 \times \cdots \times n_D}} \|\Xc\|_* \quad \textrm{ s.t. } \quad \langle \Gc_i, \Xc \rangle = y_i, \: i=1,\ldots, m.
\eean
Then
\bea \label{errNuc} 
\Eb \left\|\hat{\Xc} -\Xc^\sharp\right\|_F\leq 4\sqrt{\pi} \: \frac{\sqrt{n_1} + \cdots + \sqrt{n_D}}{m} \: \|\Xc^\sharp\|_*.
\eea
\end{theo}
\begin{proof}
We consider the following set of $n_1\times \cdots \times n_D$ tensors:
\bean
K\defeq \{\Xc: \|\Xc\|_*\leq \|\Xc^\sharp\|_*\}.
\eean
Applying Theorem 6.2 from \cite{Vershynin:ArXivEstimation14}, we obtain
\bean
\Eb\left[ \sup_{\stackrel{\Xc \in K}{\langle \Gc_i, \Xc \rangle = y_i, \: i=1,\ldots, m}}\|\hat{\Xc} -\Xc\|_F \right] \leq \sqrt{2\pi} \cdot \frac{w(K)}{\sqrt{m}},
\eean
where $w(K)$ denotes the Gaussian mean width \cite{Vershynin:ArXivEstimation14} of set $K$.
By the symmetry of $K$, we have
\begin{align*}
w(K) =\Eb \left[\sup_{\Xc\in K-K}\langle \Gc, \Xc \rangle\right] & =2\: \Eb \sup_{\Xc\in K}\langle \Gc, \Xc \rangle,
\end{align*}
where $\Gc$ is a Gaussian random tensors with $\Nc(0,1)$ entries. Then using the inequality $\langle \Gc, \Xc \rangle \leq \|\Gc\|\cdot \|\Xc\|_*$ and Lemma 
\ref{lemmaA}, we obtain
\begin{align*}
w(K) & \leq 2 \: \Eb \left[\sup_{\Xc\in K} \|\Gc\|\cdot \|\Xc\|_*\right] \leq 2 \: (\sqrt{n_1} + \cdots + \sqrt{n_D}) \: \|\Xc^\sharp\|_*.
\end{align*}
Then bound (\ref{errNuc}) follows.
\end{proof}

\subsection{The Romera-Paredes--Pontil relaxation}
In the sequel, let us denote by $N =\sum_{i=1}^Dn_i$.
The Romera-Paredes--Pontil function on $\mathbb R^{N}$, denoted by $\omega_\alpha^{**}$, is the convex envelope of the 
cardinality function $\Vert \cdot\Vert_0$ on the $\ell_2$-ball of radius $\alpha$. Its conjugate $\omega_\alpha^*$ is defined as 
\begin{align}
\label{dualRPP}
\omega_\alpha^{*}(g) & = \sup_{\Vert s\Vert_2\le \alpha} \: \langle g,s\rangle-\Vert s\Vert_0.
\end{align}
By conjugate duality, we have for all $g$, $s$ in $\mathbb R^{N}$
\begin{align}
\label{conjuRPP}
\langle g,s\rangle & \le \omega_\alpha^{*}(g)+\omega_\alpha^{**}(s). 
\end{align}
Taking $g$ such that $\omega_\alpha^{*}(g)=1$ and $\omega_\alpha^{**}(s)=1$, we have 
\begin{align*}
\langle g,s\rangle & \le 2. 
\end{align*}
Therefore, for any $g$ and $s$, we have 
\begin{align}
\label{holdRPP}
\langle g,s\rangle & \le 2 \: \omega_\alpha^{*}(g) \: \omega_\alpha^{**}(s).
\end{align}
Moreover, by the Von Neumann's trace inequality for tensors \cite{ChretienWei:LAA15}, we have
\begin{align}
\label{vn}
\langle \Gc,\Xc \rangle & \le \langle \sigma(\Gc),\sigma(\Xc) \rangle 
\end{align}
combining (\ref{holdRPP}) and (\ref{vn}), we obtain that 
\begin{align}
\label{tensholdRPP}
\langle \Gc,\Xc \rangle & \le 2 \: \omega_\alpha^{*}(\sigma(\Gc)) \: \omega_\alpha^{**}(\sigma(\Xc)).   
\end{align}
based on these results, we obtain the following theorem.
\begin{theo}
Assume that $\Gc_i$, $i,\ldots,m$ are independent $n_1\times \cdots \times n_D$ random Gaussian tensors with independent $\Nc(0,1)$ entries.
Let $\hat{\Xc}$  be a solution of the convex program
\begin{align}
\min_{\Xc \in \mathbb R^{n_1 \times \cdots \times n_D}} \quad \omega_\alpha^{**}(\sigma(\Xc)) \quad \textrm{ s.t. } \quad \langle \Gc_i, \Xc \rangle = y_i, \: i=1,\ldots, m.
\end{align}
Then
\begin{align}
\label{errRPP} 
\Eb\left[\|\hat{\Xc} -\Xc^\sharp\|_F\right] & \leq 
\frac{64 \: \alpha \sqrt{\pi}}{\sqrt{m}}   
\Bigg[\Bigg(\sqrt{\log (p)} \frac{(1+N)^{D+1}-2^{D+1}}{D+1} + \frac{1}{4}\sqrt{\pi q} \Big(N+ \frac{N(1+N)^D-2^D}{D} \nonumber \\
& +\frac{2^{D+1}-(1+N)^{D+1}}{D(D+1)}\Big) \Bigg) -\frac{N(1+N)}2 \Bigg] \omega_\alpha^{**}(\sigma(\Xc^\sharp)),
\end{align}
where 
$n^*=\max\{n_1,\ldots,n_D\}$, $p =C_{D-1}^{r-1} 3^{rn_*}$ and $q=rn_*+ (r/D)^D$.
\end{theo}
\begin{proof}
We consider the following set of $n_1\times \cdots \times n_D$ tensors:
\bean
K\defeq \{\Xc: \omega_\alpha^{**}(\sigma(\Xc))\leq \omega_\alpha^{**}(\sigma(\Xc^\sharp))\}.
\eean
Exactly as for the proof of Theorem \ref{main1}, applying Theorem 6.2 from \cite{Vershynin:ArXivEstimation14}, we obtain
\bean
\Eb\left[ \sup_{\stackrel{\Xc \in K}{\langle \Gc_i, \Xc \rangle = y_i, \: i=1,\ldots, n}}\|\hat{\Xc} -\Xc\|_F \right] \leq \sqrt{2\pi} \cdot \frac{w(K)}{\sqrt{m}}.
\eean
and using the obvious symmetry of $K$, we have
\begin{align*}
w(K) =\Eb \left[\sup_{\Xc\in K-K}\langle \Gc, \Xc \rangle\right] & =2\: \Eb \left[\sup_{\Xc\in K}\langle \Gc, \Xc \rangle\right],
\end{align*}
where $\Gc$ is a Gaussian random tensors with $\Nc(0,1)$ entries. Then using inequality (\ref{tensholdRPP}), we obtain
\begin{align*}
w(K) & \leq 4 \: \mathbb E\Bigg[ \omega_\alpha^{*}(\sigma(\Gc))\Bigg] \: \omega_\alpha^{**}(\sigma(\Xc)) 
\end{align*}
and using Lemma \ref{lemmaB}, we get 
\begin{align*}
w(K) & \leq 32 \sqrt{2} \: \alpha \:  
\Bigg(\Bigg(\sqrt{\log (p)} \frac{(1+N)^{D+1}-2^{D+1}}{D+1} + \frac{1}{4}\sqrt{\pi q} \Big(N+ \frac{N(1+N)^D-2^D}{D} \nonumber \\
& +\frac{2^{D+1}-(1+N)^{D+1}}{D(D+1)}\Big) \Bigg) -\frac{N(1+N)}2 \Bigg) \omega_\alpha^{**}(\sigma(\Xc^\sharp)).
\end{align*} 
Then bound (\ref{errRPP}) follows. 
\end{proof}

\section{Some results on Gaussian tensors}

\subsection{The spectral norm of a Gaussian tensor}

\begin{lemm}\label{lemmaA}
Let 
\bean 
\Xc=\left(\Xc_{i_1,\ldots,i_D}\right)_{i_1=1,\ldots,n_1,\ldots,i_D=1,\ldots,n_D}.  
\eean 
be a tensor with i.i.d. standard Gaussian entries. Then
\begin{align*}
\Eb\left[ \left\|\Xc\right\|\right] & \leq \sum_{i=1}^D\sqrt{n_i}.
\end{align*}
\end{lemm}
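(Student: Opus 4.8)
The plan is to recognize $\|\Xc\|$ as the supremum of a centered Gaussian process indexed by a product of unit spheres and to control that supremum by the Sudakov--Fernique (Slepian--Gordon) comparison inequality against a much simpler process. Concretely, let $\Theta$ denote the set of $D$-tuples $u=(u^{(1)},\dots,u^{(D)})$ with $u^{(d)}\in\Rb^{n_d}$ and $\|u^{(d)}\|_2=1$, and put $X_u=\langle\Xc,u^{(1)}\otimes\cdots\otimes u^{(D)}\rangle$, so that by the very definition of the spectral norm $\|\Xc\|=\sup_{u\in\Theta}X_u$ (the supremum is automatically nonnegative, since one may flip the sign of $u^{(1)}$). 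Since $\Xc$ has i.i.d.\ $\Nc(0,1)$ entries, $(X_u)_{u\in\Theta}$ is a centered Gaussian process, and $\langle\Xc,T\rangle\sim\Nc(0,\|T\|_F^2)$ for every fixed tensor $T$.

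Next I would introduce the comparison process $Y_u=\sum_{d=1}^D\langle g^{(d)},u^{(d)}\rangle$, where $g^{(1)},\dots,g^{(D)}$ are independent standard Gaussian vectors with $g^{(d)}\in\Rb^{n_d}$. For $u,v\in\Theta$, writing $t_d=\langle u^{(d)},v^{(d)}\rangle\in[-1,1]$ and expanding Frobenius norms of rank-one tensors, a direct computation gives $\Eb(X_u-X_v)^2=\|u^{(1)}\otimes\cdots\otimes u^{(D)}-v^{(1)}\otimes\cdots\otimes v^{(D)}\|_F^2=2-2\prod_{d=1}^D t_d$, while by independence of the $g^{(d)}$ one has $\Eb(Y_u-Y_v)^2=\sum_{d=1}^D\|u^{(d)}-v^{(d)}\|_2^2=2D-2\sum_{d=1}^D t_d$. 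Hence the hypothesis of Sudakov--Fernique, namely $\Eb(X_u-X_v)^2\le\Eb(Y_u-Y_v)^2$ for all $u,v\in\Theta$, is equivalent to the elementary inequality $\sum_{d=1}^D t_d-\prod_{d=1}^D t_d\le D-1$ on the cube $[-1,1]^D$.

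To close the argument I would verify this last inequality and then assemble the pieces. The map $(t_1,\dots,t_D)\mapsto\sum_d t_d-\prod_d t_d$ is affine in each coordinate, so its maximum over $[-1,1]^D$ is attained at a vertex; if $k$ of the coordinates equal $-1$ the value is $D-2k-(-1)^k$, which equals $D-1$ for $k\in\{0,1\}$ and is strictly smaller for $k\ge 2$, proving the claim (with sharp constant). Granting the comparison, Sudakov--Fernique yields $\Eb\|\Xc\|=\Eb\sup_{u\in\Theta}X_u\le\Eb\sup_{u\in\Theta}Y_u$. The right-hand side decouples over modes because the constraint set $\Theta$ is a product: $\Eb\sup_{u\in\Theta}Y_u=\sum_{d=1}^D\Eb\sup_{\|u^{(d)}\|_2=1}\langle g^{(d)},u^{(d)}\rangle=\sum_{d=1}^D\Eb\|g^{(d)}\|_2\le\sum_{d=1}^D\sqrt{\Eb\|g^{(d)}\|_2^2}=\sum_{d=1}^D\sqrt{n_d}$, the last step being Jensen's inequality. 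This gives exactly $\Eb\|\Xc\|\le\sum_{i=1}^D\sqrt{n_i}$.

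The only genuinely delicate point is the variance-comparison step: one must carry out the rank-one covariance computation correctly and then check the multilinear inequality $\sum_d t_d-\prod_d t_d\le D-1$ on $[-1,1]^D$; the equality case $t_d\equiv 1$ already shows that this route cannot do better than the stated bound $\sum_d\sqrt{n_d}$. Everything else is a textbook combination of a Gaussian comparison inequality with Jensen's inequality. As an alternative one could run an $\varepsilon$-net argument on $\Theta$ (a product of nets of total cardinality $(3/\varepsilon)^{\sum_d n_d}$) together with a union bound and Gaussian tail estimates, but this only reproduces the bound up to an unspecified multiplicative constant, so the comparison approach is preferable for obtaining the clean constant $1$.
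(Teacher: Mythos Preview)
Your proof is correct and follows essentially the same approach as the paper: both introduce the same comparison process $Y_u=\sum_{d=1}^D\langle g^{(d)},u^{(d)}\rangle$, apply Slepian/Sudakov--Fernique after verifying the increment-variance domination, and finish with Jensen's inequality. The only difference is that you explicitly prove the key inequality $\|u^{(1)}\otimes\cdots\otimes u^{(D)}-v^{(1)}\otimes\cdots\otimes v^{(D)}\|_F^2\le\sum_d\|u^{(d)}-v^{(d)}\|_2^2$ by reducing it to $\sum_d t_d-\prod_d t_d\le D-1$ on $[-1,1]^D$ and checking the vertices, whereas the paper asserts this inequality without justification.
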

\begin{proof} Consider the following stochastic process indexed by $(u^{(1)},\ldots,u^{(D)})$, where $u^{(d)}\in \Sc^{n_d-1}$ for $d=1,\ldots,D$
\bean
X_{u^{(1)},\ldots,u^{(D)}}\defeq \langle \Xc, u^{(1)}\otimes \cdots \otimes u^{(D)} \rangle.
\eean
Let us compute the variance of the increments of this random process.
For any $u^{(d)},v^{(d)}\in \Sc^{n_d-1}$ for $d=1,\ldots,D$,  we have
\begin{align*}
& \hspace{-2cm} \Eb\left[\left\|X_{u^{(1)},\ldots,u^{(D)}} - X_{v^{(1)},\ldots,v^{(D)}} \right\|_F^2\right] \\
& = \Eb\left[  \sum_{i_1,\ldots,i_D=1}^{n_1,\ldots,n_D} \Xc_{i_1,\ldots,i_D}^2 
\Big(\prod_{d=1}^D u_{i_d}^{(d)} - \prod_{d=1}^D v_{i_d}^{(d)} \Big)^2\right] 
\end{align*} 
and since the entries of $\Xc$ have unit variance, we get 
\begin{align*}
& \hspace{-2cm} \Eb\left[\left\|X_{u^{(1)},\ldots,u^{(D)}} - X_{v^{(1)},\ldots,v^{(D)}} \right\|_F^2\right] \\
& = \sum_{i_1,\ldots,i_D=1}^{n_1,\ldots,n_D} 
\Big(\prod_{d=1}^D u_{i_d}^{(d)} - \prod_{d=1}^D v_{i_d}^{(d)} \Big)^2 \\
& = \left\| u^{(1)}\otimes\cdots\otimes u^{(D)} - v^{(1)}\otimes\cdots\otimes v^{(D)}  \right\|_F^2 
\end{align*}
Moreover, since 
\begin{align*}
\left\| u^{(1)}\otimes\cdots\otimes u^{(D)} - v^{(1)}\otimes\cdots\otimes v^{(D)}  \right\|_F^2 & \leq  \sum_{i=1}^D\| u^{(i)} -  v^{(i)}\|^2_F,
\end{align*}
we obtain that 
\begin{align*}
\Eb\left[\left\|X_{u^{(1)},\ldots,u^{(D)}} - X_{v^{(1)},\ldots,v^{(D)}} \right\|_F^2\right] & = \sum_{i=1}^D\| u^{(i)} -  v^{(i)}\|^2_F \\
\end{align*}
Now let us consider another random process indexed by $(u^{(1)},\ldots,u^{(D)})$:
\begin{align*}
Y_{u^{(1)},\ldots,u^{(D)}} & \defeq \sum_{d=1}^D \langle g^{(d)}, u^{(d)}\rangle ,
\end{align*}
where $g^{(d)}\sim \Nc(0, I_{n_d})$, $d=1,\ldots,D$ 
are independent Gaussian random vectors. It is easy to see that the variance of the increments of $Y$ is
\begin{align*}
\Eb\left\| Y_{u^{(1)},\ldots,u^{(D)}} - Y_{v^{(1)},\ldots,v^{(D)}} \right\|^2
& = \Eb\left\| \sum_{d=1}^D\langle g^{(d)}, u^{(d)}- v^{(d)}\rangle  \right\|^2\\
& = \sum_{d=1}^D\left\| u^{(d)} -  v^{(d)}\right\|^2_F.
\end{align*}
Therefore, the variance of the increments of $Y$ is greater than or equal to the variance of the increments of $X$. 
Therefore, we can apply Slepian's lemma \cite{LedouxTalagrand:ProbabilityBanachSpaces13} and obtain 
\bean
\Eb \left[\|\Xc \|\right] &=&\Eb \left[\sup_{u^{(i)}:\|u^{(i)}\|=1\atop i=1,\ldots,D} X(u^{(1)},\ldots,u^{(D)})\right]\\
&\leq &  \Eb \left[\sup_{u^{(i)}:\|u^{(i)}\|=1\atop i=1,\ldots,D} Y(u^{(1)},\ldots,u^{(D)}) \right]\\
& = & \Eb \left[\sup_{u^{(i)}:\|u^{(i)}\|=1\atop i=1,\ldots,D} \sum_{i=1}^D\langle g^{(i)}, u^{(i)}\rangle \right] \\
& = & \Eb \left[\sum_{i=1}^D\|g^{(i)}\|\right] 
\eean
and by Jensen's inequality
\bean 
\Eb \left[\|\Xc \|\right] & \leq & \sum_{i=1}^D\sqrt{n_i}
\eean
where the last inequality is derived from Jensen's inequality. 
\end{proof}

\subsection{The entropy of the set of tensors with Frobenius norm equal to $\alpha$ and given Tucker rank}

Define the set 
\begin{align*}
\Tb_{r,\alpha} & = \left\{\Wc \in \mathbb R^{n_1\times\cdots\times n_D} \mid \Vert \Wc\Vert_F \le \alpha, \quad \sum_{d=1}^D {\rm rank}(\Wc^{(d)})=r\right\}. 
\end{align*}
Then, we have that 
\begin{theo}
For each $\epsilon$, the set $\Tb_{r,\alpha}$ has an $\epsilon'$-net of size $N(\epsilon)$ with 
\bean
N(\epsilon',r) &\leq & C_{D-1}^{r-1}  \left(\frac{3}{\alpha}\right)^{rn_*} \left(\frac{3\alpha}{\epsilon} \right)^{rn_*+ (r/D)^D}
\eean
where $n_*=\max\{n_1,\ldots,n_D\}$ and
\bean
\epsilon' = \epsilon  + \alpha\Big((1+ r\epsilon)^D -1\Big).
\eean
\end{theo}
\begin{proof}
For any $n$ and $\nu$ in $\mathbb N$ with $n\ge \nu$, let $\mathfrak{O}_{n,\nu}$ be the Stiefel manifold defined by 
\begin{align*}
\mathfrak{O}_{n,\nu} & = \left\{ U \in \Rb^{n\times \nu} \mid U^tU=I_\nu \right\}. 
\end{align*}
Then, it was proved in \cite[Lemma 3.1]{CandesPlan:IEEEIT11} that there exists an $\epsilon$-covering number of size 
$(9/\epsilon)^{n \nu}$. Let $\Pc(r,D)$ denote the set of integer partitions of $r$ using no more than 
$D$ integers. Now for each $\nu=(\nu_1,\ldots,\nu_D)\in \Pc(r,D)$, define the set 
\begin{align*}
\mathfrak L(\nu) & = \left\{ \Sc \in \mathbb R^{\nu_1 \times \cdots \times \nu_D} \mid 
\: \Vert \Sc\Vert_F=\alpha  \right\}.
\end{align*}
It is well known \cite{LedouxTalagrand:ProbabilityBanachSpaces13} that the unit sphere of $\mathbb R^m$ admits an $\epsilon$-net of size less than $(3/\epsilon)^m$. Using this fact, we easily obtain an $\epsilon-$net of $\mathfrak L(r)$ of size no larger than
$(3\alpha/\epsilon)^{\nu_1 \times \cdots \times \nu_D}$. 

Next, we are going to determine the size of $\epsilon$-net covering the set
\bean
\mathfrak{T}(\nu) = \left\{ \Sc\times_1 U^{(1)} \times_2  
\cdots \times_D U^{(D)}  \mid  \Sc\in\mathfrak{L}(\nu), U^{(d)}\in\mathfrak{O}_{n_d,\nu_d}, d=1,\ldots,D \right\}
\eean
Denote $\Delta \Sc = \Sc- \Sc_0$ and $\Delta U^{(d)} =  U^{(d)} - U_0^{(d)}$ for $d=1,\ldots,D$. Then
\bean
\Wc 
& = & \Sc\times_1 U^{(1)} \times_2  \cdots \times_D U^{(D)}  \\
& = & ( \Sc_0  + \Delta \Sc)\times_1 (  U^{(1)}_0 + \Delta U^{(1)}) \times_2  \cdots \times_D ( U^{(D)}_0 + \Delta U^{(D)}) \\
& = & \Sc_0\times_1 (  U^{(1)}_0 + \Delta U^{(1)}) \times_2  \cdots \times_D ( U^{(D)}_0 + \Delta U^{(D)}) \\
&& +\,\Delta\Sc\times_1 (  U^{(1)}_0 + \Delta U^{(1)}) \times_2  \cdots \times_D ( U^{(D)}_0 + \Delta U^{(D)}),
\eean
in which
\bean
&&\Sc_0\times_1 (  U^{(1)}_0 + \Delta U^{(1)}) \times_2  \cdots \times_D ( U^{(D)}_0 + \Delta U^{(D)}) \\
&=&\Sc_0\times_1 U^{(1)}_0 \times_2  \cdots \times_D U^{(D)}_0 
+  \sum_{d=1}^D\left(\Sc_0\times_d \Delta U^{(d)} \prod _{i=1\atop i\neq d}^D \times_i U^{(i)}_0 \right) + \cdots\\
&& +  \sum_{d_1, \ldots, d_k=1, 1\leq k\leq D\atop d_p\neq d_q,\forall p\neq q}^D
\left(\Sc_0\prod_{j=1}^k\times_{d_j} \Delta U^{(d_j)}\prod _{i=1\atop i\neq d_1,\ldots, d_k}^D \times_i U^{(i)}_0 \right) + \cdots\\
&& +\,\Sc_0\times_1 \Delta U^{(1)}_0 \times_2  \cdots \times_D \Delta U^{(D)}_0 .
\eean
Since
\bean
\Big\|\Delta\Sc\times_1 (  U^{(1)}_0 + \Delta U^{(1)}) \times_2  \cdots \times_D ( U^{(D)}_0 + \Delta U^{(D)})\Big\|_F &=& \|\Delta \Sc\|_F \\
\Big\|\Sc_0\prod_{j=1}^k\times_{d_j} \Delta U^{(d_j)}\prod _{i=1\atop i\neq d_1,\ldots, d_k}^D \times_i U^{(i)}_0 \Big\|_F & \leq  &\alpha \epsilon^k \prod_{j=1}^k \nu_{d_j}
\eean

It follows that
\bean
&&\|\Wc - \Sc_0\times_1 U^{(1)}_0 \times_2  \cdots \times_D U^{(D)}_0 \|_F \\
&\leq& \|\Delta\Sc\|_F + \sum_{d=1}^D\alpha \epsilon\nu_{d} + \cdots
+   \sum_{d_1, \ldots, d_k=1, 1\leq k\leq D\atop d_p\neq d_q,\forall p\neq q}^D\alpha \epsilon^k \prod_{j=1}^k \nu_{d_j}+ \cdots + \alpha \epsilon^D \prod_{d=1}^D \nu_{d}
\\
&\leq& \epsilon + \sum_{d=1}^D\alpha \epsilon r + \cdots
+   \sum_{d_1, \ldots, d_k=1, 1\leq k\leq D\atop d_p\neq d_q,\forall p\neq q}^D\alpha \epsilon^k \Big(\frac{r}{k}\Big)^k+ \cdots + \alpha \epsilon^D \Big( \frac{r}{D}\Big)^D \\
&=&\epsilon + \alpha\sum_{k=1}^D C_D^k  \epsilon^k \Big(\frac{r}{k}\Big)^k\\
&\leq & \epsilon  + \alpha\Big((1+ r\epsilon)^D -1\Big).
\eean

Now let us rewrite the set $\Tb$ as the following:
\bean
\Tb_{r,\alpha} & =& \left\{\Wc \in \mathbb R^{n_1\times\cdots\times n_D} \mid \Vert \Wc\Vert_F \le \alpha, \quad \sum_{d=1}^D {\rm rank}(\Wc^{(d)})=r\right\}\\
&=&\left\{ \Sc\times_1 U^{(1)} \times_2  \cdots \times_D U^{(D)} 
\mid \nu\in \Pc(r,D), \Sc\in \mathfrak{L}(\nu),\, U^{(d)}\in \mathfrak{O}_{n_d,\nu_d} ,d=1,\ldots,D
\right\}
\eean
and denote 
\bean
\epsilon' = \epsilon  + \alpha\Big((1+ r\epsilon)^D -1\Big).
\eean

Summing up our discussion, we conclude that there exists an $\epsilon$-net of $\Tb_{r,\alpha}$ with covering number
\begin{align*}
N(\epsilon',r) & \le  \sum_{\nu\in \Pc(r, D) }\: 
 \left(\frac{9}{\epsilon}\right)^{n_1\nu_1+\cdots + n_D\nu_D}
\left(\frac{3\alpha}{\epsilon}\right)^{\nu_1 \times \cdots \times \nu_D} ,
\end{align*}
and since the cardinality of $\Pc(r,D)$ equals $C_{D-1}^{r-1}$, we obtain that  
\bean
N(\epsilon',r) & \leq & C_{D-1}^{r-1} \max_{\nu\in\Pc(r,D)} 
  \left(\frac{9}{\epsilon}\right)^{n_1\nu_1+\cdots + n_D\nu_D}
\left(\frac{3\alpha}{\epsilon}\right)^{\nu_1 \times \cdots \times \nu_D}  \\
&\leq & C_{D-1}^{r-1} \max_{\nu\in\Pc(r,D)} 
  \left(\frac{9}{\epsilon}\right)^{n_1\nu_1+\cdots + n_D\nu_D}
\max_{\nu\in\Pc(r,D)}  
\left(\frac{3\alpha}{\epsilon}\right)^{\nu_1 \times \cdots \times \nu_D}
  \\
&\leq & C_{D-1}^{r-1}\left(\frac{9}{\epsilon}\right)^{rn_*}
 \left(\frac{3\alpha}{\epsilon} \right)^{ (r/D)^D} \\
& = &   C_{D-1}^{r-1}  \left(\frac{3}{\alpha}\right)^{rn_*} \left(\frac{3\alpha}{\epsilon} \right)^{rn_*+ (r/D)^D} ,
\eean
where $n_*=\max\{n_1,\ldots,n_D\}$.
\end{proof}

\subsection{The dual Romera-Paredes--Pontil function of Gaussian tensors}
In this section, we study the expected value of the evaluation at a Gaussian random tensor of the 
dual $\omega_\alpha^*$ of the Romera-Paredes--Pontil function $\omega_\alpha^{**}$. 
We will need some further notations. For any vector $s \in \mathbb R^{N}$. 
Using (\ref{dualRPP}), one easily obtains  
\begin{align}
\label{omstar}
\omega_\alpha^*(\Xc) & \leq  \sup_{\Vert w\Vert_2\le \alpha} \: \langle \sigma(\Xc),w\rangle-\Vert w\Vert_0.
\end{align}
We also have, by equation (7) in \cite{RomeraParedesPontil:NIPS13}, 
\begin{align*}
\omega_\alpha^*(\Xc) & \leq  \alpha \: \max_{r=0,\ldots,N} \: \Vert\sigma_{1:r}^\downarrow(\Xc)\Vert_2-r.
\end{align*}
We have the following result. 
\begin{lemm}\label{lemmaB}
Let 
\bean 
\Xc=\left(\Xc_{i_1,\ldots,i_D}\right)_{i_1=1,\ldots,n_1,\ldots,i_D=1,\ldots,n_D}.  
\eean 
be a tensor with i.i.d. standard Gaussian entries. Then
\bean
\label{dud0}
\Eb \left[\omega_\alpha^*(\Xc)\right] 
& \le &8 \sqrt{2} \: \alpha \:  
\Bigg(\sqrt{\log (p)} \frac{(1+N)^{D+1}-2^{D+1}}{D+1} + \frac{1}{4}\sqrt{\pi q} \Big(N+ \frac{N(1+N)^D-2^D}{D} \nonumber \\
& & +\frac{2^{D+1}-(1+N)^{D+1}}{D(D+1)}\Big) \Bigg) -\frac{N(1+N)}2\nonumber 
\eean
\end{lemm}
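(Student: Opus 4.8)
The plan is to start from the elementary reduction recorded just above the statement, namely $\omega_\alpha^*(\Xc)\le\max_{r=0,\dots,N}\bigl(\alpha\,\|\sigma_{1:r}^\downarrow(\Xc)\|_2-r\bigr)$, in which $\sigma_{1:r}^\downarrow(\Xc)$ is the vector of the $r$ largest coordinates of the concatenated spectrum $\sigma(\Xc)$. The term for $r=0$ vanishes, so the right-hand side is nonnegative; bounding the maximum by the sum over $r=1,\dots,N$ — all summands being nonnegative in the relevant range, since $\|\sigma_{1:r}^\downarrow(\Xc)\|_2\ge\sqrt r\,\sigma_r^\downarrow(\Xc)$ and the singular values of the matricizations of a Gaussian tensor are large — and then taking expectations, it suffices to estimate $\Eb\bigl[\|\sigma_{1:r}^\downarrow(\Xc)\|_2\bigr]$ for each fixed $r$; the term $-N(1+N)/2=-\sum_{r=1}^N r$ in the claimed inequality is precisely the residue of the $-r$'s.

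The central step is to turn $\|\sigma_{1:r}^\downarrow(\Xc)\|_2$ into a constant multiple of the supremum of the Gaussian linear functional $\Wc\mapsto\langle\Xc,\Wc\rangle$ over the set $\Tb_{r,\alpha}$ of tensors with Frobenius norm at most $\alpha$ and Tucker-rank-sum $r$, so that the metric-entropy Theorem for $\Tb_{r,\alpha}$ becomes applicable. One inequality is immediate from the tensor Von Neumann trace inequality (\ref{vn}): any $\Wc\in\Tb_{r,\alpha}$ has a concatenated spectrum $\sigma(\Wc)$ supported on at most $r$ coordinates with $\|\sigma(\Wc)\|_2=\|\Wc\|_F\le\alpha$, so $\langle\Xc,\Wc\rangle\le\langle\sigma(\Xc),\sigma(\Wc)\rangle\le\alpha\,\|\sigma_{1:r}^\downarrow(\Xc)\|_2$. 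The reverse comparison — producing from the $r$-sparse spectral profile an actual tensor of controlled Tucker rank and Frobenius norm that realizes, up to a universal constant, the value $\alpha\,\|\sigma_{1:r}^\downarrow(\Xc)\|_2$ — is the delicate point, because the mode-wise spectra of an HOSVD core all carry the same $\ell_2$ norm and an arbitrary $r$-sparse profile need not be realizable exactly; I would use the HOSVD of $\Xc$, split the $r$ retained singular directions among the modes according to the spectrum-maximizing partition, and absorb the resulting mismatch into the constant. This is the main obstacle of the proof.

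Granting the comparison, $\alpha\,\Eb\bigl[\|\sigma_{1:r}^\downarrow(\Xc)\|_2\bigr]$ is controlled by the Gaussian width $\Eb\bigl[\sup_{\Wc\in\Tb_{r,\alpha}}\langle\Xc,\Wc\rangle\bigr]$, which I would estimate by a chaining argument in the Frobenius metric, the canonical metric of this process since $\Eb\bigl[\langle\Xc,\Wc-\Wc'\rangle^2\bigr]=\|\Wc-\Wc'\|_F^2$. Feeding in the covering numbers of the entropy Theorem in the form $\log N(\delta,\Tb_{r,\alpha})\le\log p+q\log(c\alpha/\delta)$, with $p=C_{D-1}^{r-1}3^{rn_*}$ and $q=rn_*+(r/D)^D$ the parameters displayed there, using the subadditivity $\sqrt{\log p+q\log(c\alpha/\delta)}\le\sqrt{\log p}+\sqrt q\,\sqrt{\log(c\alpha/\delta)}$, the maximal inequality bounding the expected maximum over an $\epsilon'$-net by its diameter (at most $2\alpha$) times $\sqrt{2\log(\text{net size})}$, the Gaussian integral $\int_0^1\sqrt{\log(1/u)}\,du=\tfrac{\sqrt\pi}{2}$, and Lemma \ref{lemmaA} to render the spectral-norm discretization error (incurred because the covering radius furnished by the entropy Theorem is $\epsilon'=\epsilon+\alpha((1+r\epsilon)^D-1)$, not $\epsilon$) negligible after optimizing the scale, one arrives for each $r$ at a bound of the shape $\Eb\bigl[\sup_{\Wc\in\Tb_{r,\alpha}}\langle\Xc,\Wc\rangle\bigr]\le c\,\alpha\bigl(\sqrt{\log p}\;P_1(r)+\sqrt{\pi q}\;P_2(r)\bigr)$, where $P_1$ and $P_2$ are polynomials in $r$ whose degrees are those governing the net exponents $\sum_d n_d\nu_d\le rn_*$ and $\prod_d\nu_d\le(r/D)^D$. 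Summing over $r=1,\dots,N$ with $\sum_{r}r^k\le\frac{(1+N)^{k+1}-2^{k+1}}{k+1}$ and the analogous partial sums converts $\sum_r P_1(r)$ and $\sum_r P_2(r)$ into the two explicit polynomials $\frac{(1+N)^{D+1}-2^{D+1}}{D+1}$ and $N+\frac{N(1+N)^D-2^D}{D}+\frac{2^{D+1}-(1+N)^{D+1}}{D(D+1)}$ of the statement; subtracting $\sum_r r=\frac{N(1+N)}{2}$ then gives the asserted inequality, the constant $8\sqrt2$ collecting the $\sqrt2$ of the Gaussian maximal inequality, the diameter normalization, and the slack of the comparison step. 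Apart from that comparison, the remaining work is careful but routine accounting of the covering-number exponents through the entropy integral and of the polynomial sums over $r$.
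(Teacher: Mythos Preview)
Your strategy is the paper's strategy: pass from $\omega_\alpha^*$ to a sum over $r$ of Gaussian widths of $\Tb_{r,\alpha}$, bound each by Dudley's entropy integral fed with the covering-number theorem for $\Tb_{r,\alpha}$, and then sum the resulting polynomials in $r$. Two points of difference are worth recording. First, the paper does not isolate the ``reverse comparison'' you flag as the main obstacle; it asserts in one line, citing the tensor Von Neumann inequality, that $\omega_\alpha^*(\Xc)\le\max_r\bigl(\sup_{\Wc\in\Tb_{r,\alpha}}\langle\Xc,\Wc\rangle-r\bigr)$ and moves on --- Von Neumann in fact gives the opposite inequality $\langle\Xc,\Wc\rangle\le\langle\sigma(\Xc),\sigma(\Wc)\rangle$, so your caution here is justified; the paper simply does not address the issue, and your HOSVD-based workaround is a reasonable but unfinished sketch toward filling that gap. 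Second, the paper does not invoke Lemma~\ref{lemmaA} to control the discretization error arising from $\epsilon'$ versus $\epsilon$; instead it performs the change of variables $d\epsilon'=(1+rD(1+r\epsilon)^{D-1})\,d\epsilon$ directly inside the Dudley integral, and this Jacobian is exactly what produces the $r$-dependent factors $(1+r)^D$ and $1+rD(1+r)^{D-1}$ whose sums over $r=1,\ldots,N$ give the two explicit polynomials $\frac{(1+N)^{D+1}-2^{D+1}}{D+1}$ and $N+\frac{N(1+N)^D-2^D}{D}+\frac{2^{D+1}-(1+N)^{D+1}}{D(D+1)}$ in the statement.
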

\begin{proof} 
Using (\ref{omstar}), and the tensor Von Neumann inequality \cite[Theorem 1]{ChretienWei:LAA15}, 
one obtains that
\begin{align}
\label{omstar}
\omega_\alpha^*(\Xc) & \le  \max_{r=1,\ldots, N} \: 
\sup_{\stackrel{\Vert \Wc\Vert_F\le \alpha}{\sum_{d=1}^D {\rm rank}(\Wc^{(d)})=r}} \: \langle \Xc,\Wc\rangle-r \\
& \le \alpha \: \sum_{r=1,\ldots,N} \: 
\sup_{\stackrel{\Vert \Wc\Vert_F\le 1}{\sum_{d=1}^D {\rm rank}(\Wc^{(d)})=r}} \: \langle \Xc,\Wc\rangle-r 
\end{align}
Since we must enforce the constraint $\Vert \Wc\Vert_F\le 1$, Dudley's entropy bound says that 
\cite{LedouxTalagrand:ProbabilityBanachSpaces13}
\begin{align}
\label{dud}
\Eb \left[\omega_\alpha^*(\Xc)\right] & = 8 \sqrt{2} \: \alpha \: \sum_{r=1,\ldots,N} \:  
\int_{0}^{1} \sqrt{\log(N(\epsilon))} \: d\epsilon-r.  
\end{align}
Let us compute the integral term. We have
\bean
&&\int_{0}^{1} \sqrt{\log(N(\epsilon'))} \: d\epsilon' \\
&=& \int_{0}^{1} \left[\log\left(C_{D-1}^{r-1}  \left(3\right)^{rn_*} \left(\frac{3}{\epsilon} \right)^{rn_*+ (r/D)^D}
\right)\right]^{1/2} \Big(1+r D (1+r\epsilon)^{D-1}\Big) d\epsilon \\
&\leq & \int_0^{1} \sqrt{\log (p)}(1+ r D (1+r\epsilon)^{D-1})d\epsilon 
+\int_0^{1} \sqrt{q \log (3/\epsilon)}(1+ r D (1+r\epsilon)^{D-1})d\epsilon,
\eean
where $p=C_{D-1}^{r-1} 3^{rn_*}$ and $q=rn_*+ (r/D)^D$.
We have
\bean
&&\int_0^{1} \sqrt{\log (p)}(1+ r D (1+r\epsilon)^{D-1})d\epsilon \\
&=& \sqrt{\log(p)} + r D \sqrt{\log (p)}  \int_0^{1} (1+r\epsilon)^{D-1}d\epsilon \\
&=& \sqrt{\log(p)} + r D \sqrt{\log (p)}  \int_1^{r+1} z^{D-1}
\frac{1}{r}
dz \\
&=& \sqrt{\log(p)} + \sqrt{\log (p)}  ( (r +1)^D -1 ) \\
&=& \sqrt{\log (p)}  (r +1)^D,
\eean
and
\bean
& &\int_0^{1} \sqrt{q \log (3/\epsilon)}(1+ r D (1+r\epsilon)^{D-1})d\epsilon \\
&\leq &\sqrt{q}\Big(1+ r D (1+r)^{D-1}\Big)\int_0^{1} \sqrt{\log (3/\epsilon)}d\epsilon \\
&=&\sqrt{q}\Big(1+ r D (1+r)^{D-1}\Big)\int_{\sqrt{\log 3}}^{+\infty} x^2 e ^{-x^2}dx \\
&<&\frac{1}{4}\sqrt{\pi q}\Big(1+ r D (1+r)^{D-1}\Big).
\eean
Therefore
\bean
&&\int_{0}^{1} \sqrt{\log(N(\epsilon'))} \: d\epsilon' \\
& \leq & \sqrt{\log (p)}  (1+r)^D + \frac{1}{4}\sqrt{\pi q}\Big(1+ r D (1+r)^{D-1}\Big),
\eean
where $p=C_{D-1}^{r-1} (3)^{rn_*}$ and $q=rn_*+ (r/D)^D$. Plugging this result into (\ref{dud}), we obtain 
\begin{align}
\label{dud2}
\Eb \left[\omega_\alpha^*(\Xc)\right] & \le 8 \sqrt{2} \: \alpha \: \sum_{r=1,\ldots,N} \:  
\Bigg(\sqrt{\log (p)}  (1+r)^D + \frac{1}{4}\sqrt{\pi q}\Big(1+  r D (1+r)^{D-1}\Big) \Bigg) -r.  
\end{align}
Approximating sums by integrals, we thus obtain 
\begin{align}
\label{dud2}
\Eb \left[\omega_\alpha^*(\Xc)\right] & \le 8 \sqrt{2} \: \alpha \:  
\Bigg(\sqrt{\log (p)} \frac{(1+N)^{D+1}-2^{D+1}}{D+1}  + \frac{1}{4}\sqrt{\pi q} \Big(N+ \frac{N(1+N)^D-2^D}{D} \nonumber \\
& +\frac{2^{D+1}-(1+N)^{D+1}}{D(D+1)}\Big) \Bigg) -\frac{N(1+N)}2\nonumber 
\end{align}
as announceed. 
\end{proof}


\bibliographystyle{amsplain}
\bibliography{Biblio}{}

\end{document}